\theoremstyle{plain}
\newtheorem{thm}{\protect\theoremname}[section]
\theoremstyle{plain}
\newtheorem{lem}[thm]{\protect\lemmaname}
\newenvironment{proof}[1][\protect\proofname]{\par
	\normalfont\topsep6\p@\@plus6\p@\relax
	\trivlist
	\itemindent\parindent
	\item[\hskip\labelsep\scshape #1]\ignorespaces
}{%
	\endtrivlist\@endpefalse
}
\providecommand{\proofname}{Proof}
\theoremstyle{plain}
\newtheorem{cor}[thm]{\protect\corollaryname}
\date{}
\providecommand{\corollaryname}{Corollary}
\providecommand{\lemmaname}{Lemma}
\providecommand{\theoremname}{Theorem}
\begin{document}
\title{Some remarks about self-products and entropy}
\author{Michael Hochman\thanks{Research supported by ISF research grant 3056/21.}}
\maketitle
\begin{abstract}
Let $(X,\mathcal{B},\mu,T)$ be a probability-preserving system with
$X$ compact and $T$ a homeomorphism. We show that if every point
in $X\times X$ is two-sided recurrent, then $h_{\mu}(T)=0$, resolving
a problem of Benjamin Weiss, and that if $h_{\mu}(T)=\infty$ then
every full-measure set in $X$ contains mean-asymptotic pairs (i.e.
the associated process is not tight), resolving a problem of Ornstein
and Weiss. 
\end{abstract}

\section{\label{sec:Introduction}Introduction}

Many dynamical properties of a continuous or measure-preserving transformation
$T$ can be discerned from the behavior of the self-product map $T\times T$.
For example, in the category of probability preserving transformations,
continuous spectrum is equivalent to ergodicity of the self-product
(this is weak mixing), and in the topological category of homeomorphisms
of compact metric spaces, distality is equivalent to the self-product
decomposing into disjoint minimal systems. This note concerns two
relatively recent additions to this list, which connect the behavior
of self-products to entropy:
\begin{itemize}
\item A topological system $(X,T)$ is called \textbf{doubly minimal }if
for every $x,y\in X$ that do not lie on the same orbit, $(x,y)$
has dense two-sided orbit under $T\times T$. Benjamin Weiss \cite{Weiss1995}
showed that this property implies $h_{top}(T)=0$, and conversely,
if an ergodic measure-preserving system has zero entropy then it can
be realized as an invariant measure on a doubly minimal system.
\item A topological system $(X,T)$ is \textbf{mean distal }if for every
$x\neq y$ in $X$, the Hamming distance
\[
\overline{d}(x,y)=\limsup_{n\rightarrow\infty}\frac{1}{2n+1}\sum_{i=-n}^{n}d(T^{i}x,T^{i}y)
\]
is positive, and an invariant measure $\mu$ on $X$ is called \textbf{tight
}if, after removing a nullset, $\overline{d}(x,y)>0$ for every $x,y$.
Thus, every invariant measure on a mean distal system is tight. In
\cite{OrnsteinWeiss2004}, Ornstein and Weiss showed that positive,
finite entropy precludes tightness, whereas every zero-entropy measure-preserving
system has an extension that is realized on a mean distal system.
\end{itemize}
Our purpose in this note is to fill in two missing pieces of this
story.

In his study of doubly minimal systems, Weiss observed that double
minimality implies that every point in $X\times X$ is two-sided recurrent,
a property that we shall call \textbf{double recurrence}, and asked
whether double recurrence by itself implies entropy zero. 

If one asks for forward (instead of two-sided) recurrence of every
pair, then the answer is affirmative, because every positive entropy
system possesses an off-diagonal positively-asymptotic pair \cite{BlanchardHostRuette2002}.
From this it follows that a positive-entropy system $X$ cannot have
all points recurrent in $X^{4}$, since one can take $(x_{1},x_{2})$
forward asymptotic and $(x_{3},x_{4})$ backward asymptotic, with
$x_{1}\neq x_{2}$ and $x_{3}\neq x_{4}$ and then $(x_{1},x_{2},x_{3},x_{4})\in X^{4}$
is not recurrent. In unpublished work Weiss extended this conclusion
to $X^{3}$, but the original question for $X\times X$ was not resolved.
This is our first result:
\begin{thm}
\label{thm:double-recurrence}Let $(X,\mathcal{B},\mu,T)$ be an invertible
ergodic measure preserving system on a compact metric space. If $h_{\mu}(T)>0$,
then there exists $(x,x')\in X\times X$ that is not two-sided recurrent
under $T\times T$.
\end{thm}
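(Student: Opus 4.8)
The plan is to reduce the theorem to constructing a single off-diagonal pair whose orbit under $T\times T$ converges to the diagonal in both time directions. A point is two-sided recurrent precisely when it is forward- or backward-recurrent (any sequence of return times with $|n_k|\to\infty$ has infinitely many terms of one sign), so it suffices to produce $(x,x')$ with $x\neq x'$ that is neither. The cleanest sufficient condition is that the pair be \textbf{doubly asymptotic}, meaning $d(T^nx,T^nx')\to 0$ both as $n\to+\infty$ and as $n\to-\infty$. Indeed, if such a pair were recurrent there would be times $|n_k|\to\infty$ with $(T^{n_k}x,T^{n_k}x')\to(x,x')$, forcing $d(T^{n_k}x,T^{n_k}x')\to d(x,x')>0$ and contradicting convergence to the diagonal. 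So a doubly-asymptotic off-diagonal pair is automatically non-recurrent on both sides at once.

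This isolates why $X\times X$ is harder than $X^{3}$ or $X^{4}$. There the two obligations, spoiling forward recurrence and spoiling backward recurrence, may be assigned to different coordinates, using a forward-asymptotic pair in one block and a backward-asymptotic pair in another. Here there is only one pair: the off-diagonal forward-asymptotic pair furnished by \cite{BlanchardHostRuette2002} kills forward recurrence for free but leaves the backward orbit entirely uncontrolled, and nothing prevents it from being backward recurrent. The real task is thus to strengthen the production of asymptotic pairs so as to govern both time directions simultaneously for one and the same pair.

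To do this I would exploit the positive entropy symmetrically in the two directions (recall $h_{\mu}(T^{-1})=h_{\mu}(T)>0$). By Sinai's theorem the system has a nontrivial Bernoulli factor, inside which doubly-asymptotic off-diagonal pairs are manifest (sequences differing in a single coordinate); the problem becomes lifting such a pair through the measurable factor map to a genuinely, i.e.\ topologically, doubly-asymptotic pair in $X$. Concretely I would fix a finite partition $\mathcal{P}$ with $h_{\mu}(T,\mathcal{P})>0$ and use the Shannon--McMillan--Breiman theorem, which shows that a typical long symmetric $\mathcal{P}$-name is shared by exponentially many points and so supplies, at every scale, distinct points agreeing on long central windows. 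I would then run a Borel--Cantelli/limiting scheme along a rapidly growing schedule of windows, at each stage peeling off a point that agrees with the previous one on an ever longer pair of tails, and pass to a limit $x'\neq x$ with $d(T^nx,T^nx')\to 0$ as $n\to\pm\infty$. To make the convergence honestly topological rather than merely in density, I would use Lusin's theorem to pass to a compact set on which the coding is continuous, Birkhoff's theorem to ensure both orbits meet it with frequency one in each direction, and calibrate the schedule so that the times at which either orbit leaves this good set are summable and hence finite along the constructed pair.

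The crux, and the step I expect to be hardest, is exactly this upgrade from density-closeness to honest closeness, carried out forward and backward simultaneously for a single pair. Measure-theoretic arguments readily yield mean-asymptotic pairs, which is the notion relevant to the tightness theorem above, but a mean-asymptotic pair can be recurrent and is therefore useless here; one genuinely needs $d(T^nx,T^nx')\to 0$ for all large $|n|$. Compounding this, non-recurrence is a measure-zero phenomenon, since Poincar\'e recurrence makes almost every pair recurrent for any invariant measure on $X\times X$, so the desired pair cannot be located by averaging and must be built by hand. If doubly-asymptotic pairs should fail to exist in some positive-entropy system, the fallback is to retain the forward-asymptotic pair of \cite{BlanchardHostRuette2002} and instead force its joint backward orbit to be non-recurrent, working inside the stable set of a generic point with its conditional leaf measure and using that backward time expands the stable relation; but the essential difficulty, controlling the backward direction of one fixed pair, persists in that approach as well.
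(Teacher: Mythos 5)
There is a genuine gap at the heart of your proposal: you reduce everything to producing an off-diagonal \emph{doubly asymptotic} pair, i.e. $x\neq x'$ with $d(T^{n}x,T^{n}x')\to 0$ as $n\to\pm\infty$, but the existence of such pairs in an arbitrary positive-entropy system is precisely the part you do not prove, and the construction you sketch cannot deliver it. In symbolic terms a doubly asymptotic pair is a pair of distinct points agreeing on all but finitely many coordinates, so your SMB/Borel--Cantelli scheme needs, at each stage, two distinct typical points sharing both tails outside a finite central window; equivalently it needs the conditional entropy $H(X_{-n}^{n}\mid\{X_{i}\}_{|i|>m})$ to stay bounded away from zero. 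This is exactly what can fail: by the bilateral determinism construction of Ornstein and Weiss \cite{OrnsteinWeiss1975}, a positive-entropy process can be presented so that the coordinates outside any finite window determine the whole point almost surely, and the paper explicitly flags this obstruction (conditioning on both past and future can collapse entropy to zero) right after Lemma \ref{lem:main-lemma-double-recurrence}. Your fallback paragraph (forcing the backward orbit of the Blanchard--Host--Ruette forward-asymptotic pair to be non-recurrent via stable leaves) is a one-sentence wish, not an argument, so the gap is not closed there either.

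The paper's proof succeeds by demanding much less than double asymptoticity. It partitions $\mathbb{Z}$ into $n$-blocks and conditions asymmetrically --- on the even blocks in the past and the odd blocks in the future --- and the content of Lemma \ref{lem:main-lemma-double-recurrence} is that \emph{this} conditioning does not collapse the entropy of the central window $[-n,n)$. A combinatorial full-binary-tree lemma then yields $u,v$ that agree on every conditioned block yet differ in every subinterval of $[-n,n)$ of length $n/2$. Non-recurrence is then a purely combinatorial overlap argument: any shift by more than $2n$ drags some agreement block over $[-n,n)$ in a set of length at least $n/2$, which necessarily contains a coordinate where $u,v$ differ. The resulting pair is asymptotic in neither direction; it is the interleaving of guaranteed agreements with well-spread disagreements that kills recurrence. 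To salvage your route you would have to prove that every positive-entropy system has an off-diagonal doubly asymptotic pair, which your typicality/entropy argument cannot do in view of bilateral determinism; otherwise the target must be weakened along the lines the paper takes.
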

In view of this and of Weiss's realization result on doubly minimal
systems, it follows that a measure preserving system has positive
entropy if and only if it can be realized on a doubly recurrent system.

Our second result concerns tightness. Ornstein and Weiss showed that
if $0<h_{\mu}(T)<\infty$ then $T$ is not tight. Their proof, however,
did not apply when $h_{\mu}(T)=\infty$, and this case was left open.
Slightly modifying their argument, we show
\begin{thm}
\label{thm:tightness}Infinite entropy systems are never tight.
\end{thm}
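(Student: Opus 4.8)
The plan is to prove the contrapositive in the form used by Ornstein and Weiss: assuming $h_\mu(T)=\infty$, I will show that \emph{every} full-measure set $X_0\subseteq X$ contains an off-diagonal pair $x\neq y$ with $\overline{d}(x,y)=0$. Recall the mechanism in the range $0<h_\mu(T)<\infty$. There Krieger's theorem supplies a finite generating partition $\mathcal P$, which on a compact metric space may be taken to be a topological generator, so that $\delta_m:=\sup\{d(x,y):\mathcal P\text{-names of }x,y\text{ agree on }[-m,m]\}\to 0$. The key metric estimate is then: if $x,y$ have $\mathcal P$-names that disagree only on a set $D\subseteq\mathbb Z$ of zero upper density, then for each fixed $m$ the set $\{i:[i-m,i+m]\cap D\neq\emptyset\}$ still has zero density, whence $d(T^ix,T^iy)\le\delta_m$ for a density-one set of $i$ and therefore $\overline{d}(x,y)\le\delta_m$; letting $m\to\infty$ gives $\overline{d}(x,y)=0$. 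Ornstein and Weiss use positive entropy of $\mathcal P$ to produce, inside any full-measure set, such an off-diagonal pair whose names agree off a density-zero set.

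The only place finiteness of $h_\mu(T)$ enters is in producing the finite generator, and this is exactly what I would modify. For $h_\mu(T)=\infty$ no finite generator exists, so instead I fix a refining sequence of finite partitions $\mathcal P_1\prec\mathcal P_2\prec\cdots$ with $\operatorname{diam}\mathcal P_k\to 0$ and $\bigvee_k\mathcal P_k=\mathcal B$; such topological generators exist on any compact metric space. Now two points $x\neq y$ are mean-asymptotic as soon as, \emph{for every} $k$, their $\mathcal P_k$-names disagree only on a set of zero upper density: since agreement of $\mathcal P_k$-names at time $i$ forces $d(T^ix,T^iy)\le\operatorname{diam}\mathcal P_k$, the previous estimate gives $\overline{d}(x,y)\le\operatorname{diam}\mathcal P_k$ for every $k$, hence $\overline{d}(x,y)=0$. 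Because $h_\mu(T,\mathcal P_k)\to h_\mu(T)=\infty$, each $\mathcal P_k$ has large positive entropy, so the Ornstein--Weiss pair-production applies at every level $k$; running it along a diagonal over $k$ and over growing coordinate windows should yield a single pair meeting all the density-zero conditions simultaneously.

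The main obstacle is exactly this last step: extracting one genuine off-diagonal pair with $\overline{d}=0$ that lies in the \emph{prescribed} full-measure set $X_0$. One cannot hope to do this by producing a positive-measure set of such pairs and passing to a weak limit: if $\lambda$ were a self-joining of $\mu$ carried by $\{\overline{d}=0\}$, then applying Birkhoff's theorem to $d(x,y)$ along each ergodic component of $\lambda$ would force $\overline{d}$ to equal $\int d\,d(\text{component})$ a.e., so $\lambda$ would sit on the diagonal; consequently $(\mu\times\mu)(\overline{d}=0)=0$ and, for a.e.\ $x$, the set of mean-asymptotic partners of $x$ is $\mu$-null. Thus the pairs we seek form a $(\mu\times\mu)$-null, null-fibered set, and since $\overline{d}$ is a $\limsup$ it is ill-behaved under the limits in play, so measure-positivity and compactness give no pair automatically. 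The way around this is a careful inductive selection: at stage $k$ one keeps a nested pair of positive-measure subsets of $X_0$ cut out by longer and longer central name-blocks, using that $\mu(X_0)=1$ so $X_0$ meets every cylinder in positive measure, and using the large entropy of $\mathcal P_k$ to have enough admissible continuations to extend the agreement of the two names to density one at level $k$ while spending only a lacunary, density-zero set of coordinates on the disagreements that keep $x$ and $y$ distinct. Infinite entropy is what guarantees this freedom at every scale and every diameter at once, and it is precisely what compensates for the absence of a finite generator in the Ornstein--Weiss argument.
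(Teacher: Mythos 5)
You have correctly located both the role that finiteness of entropy plays in the Ornstein--Weiss proof (it produces a finite generator, hence a compact symbolic model) and the genuinely hard point of any extension to $h_{\mu}(T)=\infty$: manufacturing a \emph{single} off-diagonal mean-asymptotic pair inside a \emph{prescribed} full-measure set $X_{0}$. But the step you yourself flag as ``the main obstacle'' is exactly where the argument stops being a proof. Two concrete problems. First, ``keeping a nested pair of positive-measure subsets of $X_{0}$ cut out by longer and longer central name-blocks'' does not produce a pair of points of $X_{0}$: a decreasing sequence of positive-measure subsets of $X_{0}$ can have empty intersection (the measures necessarily tend to $0$ as the cylinders shrink), and the point determined by the limiting infinite name need not lie in $X_{0}$ even though each of its cylinder neighborhoods meets $X_{0}$ in positive measure. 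In the finite-entropy proof this issue is handled topologically, not measure-theoretically: one thickens the removed null set $E$ to an open set $U$ of small measure, encodes $U$ into the bottom factor so that all approximating pairs $x^{(n)},y^{(n)}$ avoid $U$, and then uses compactness of the finite-alphabet shift together with closedness of $X\setminus U$ to conclude that the \emph{limit} pair avoids $U\supseteq E$. That compactness is precisely what an infinite refining sequence of finite partitions fails to restore. Second, ``the large entropy of $\mathcal{P}_{k}$ gives enough admissible continuations to extend the agreement to density one'' is not an available lemma: in Ornstein--Weiss the pair production \emph{is} the whole tower of factors $X_{n}$ together with the Hamming-ball covering of typical names, and it yields the pair only as a compactness limit, so it cannot be invoked level-by-level as a black box and then ``run along a diagonal.'' Your own (correct) observation that the mean-asymptotic pairs form a $(\mu\times\mu)$-null, null-fibered set shows how little slack there is; the selection must be exact, and the sketch does not supply it.

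The paper's fix is different and sidesteps the diagonalization entirely: choose a measure-theoretic factor $Z$ of $X$ with $0<h_{\mu}(X\,|\,Z)<\infty$ (by standard techniques, or from the Weak Pinsker property), realize the extension topologically as $X=Z\times\{1,\ldots,a\}^{\mathbb{Z}}$ with the factor map being projection to the first coordinate, and run the entire Ornstein--Weiss argument \emph{relative} to $Z$ --- which is possible because their construction of $X_{n}$ from $X_{n-1}$ is already a relative one. The finite fiber alphabet restores compactness, so the limit step and the open-set trick go through verbatim. To salvage your approach you would need to replace the inductive selection by a genuine conditional version of the whole Ornstein--Weiss construction relative to the algebra generated by the coarse partitions; at that point you will essentially have rediscovered the paper's argument.
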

Together with the results in \cite{OrnsteinWeiss2004}, this provides
yet another characterization of zero/positive entorpy: A system has
entropy zero if and only if it is a factor of a mean distal system.

The remainder of the paper consists of two sections, one for each
theorem. By convention, all our measure spaces are Borel spaces endowed
with a compatible compact metric, and all sets and measures are Borel.
The transformations $T,S$ defining our dynamical are always homeomorphisms.
Intervals $[a,b]$ are often identified with their integer counterparts,
$[a,b]\cap\mathbb{Z}$. If $(\xi_{i})$ is a sequence then we write
$\xi_{k}^{\ell}=\xi_{k}\xi_{k+1}\ldots\xi_{\ell}$ and sometimes $\xi_{[k,\ell]}=\xi_{k}\xi_{k+1}\ldots\xi_{\ell}$.

\section{\label{sec:Double-recurrence}Double recurrence}

In this section we prove Theorem \ref{thm:double-recurrence}, first
for symbolic systems, and then in general. 

The heuristic of the proof is that a positive-entropy system $(X,T)$
should decompose, approximately, as a product; and if there were an
exact product structure $X=X_{1}\times X_{2}$ for systems $X_{1},X_{2}$
of positive entropy, then we could find a forward asymptotic pair
$(u_{1},u_{2})\in X_{1}\times X_{1}$, and a backward asymptotic pair
$(v_{1},v_{2})\in X_{2}\times X_{2}$. Then the point $((u_{1},v_{1}),(u_{2},v_{2}))\in X\times X$
would not be recurrent. 

Such an exact product structure is available in the measure-theoretic
framework, by the Weak Pinsker property \cite{Austin2018}, but not
in the topological one. Nevertheless, positive entropy still gives
some semblance of independence, in that we can partition time into
complementary periodic sets $E_{1},E_{2}\subseteq\mathbb{Z}$, such
that the behavior of orbits on the two sets is approximately independent.
Using this, we will be able to find $u,v\in X$ which are (``forward'')
asymptotic along $E_{1}^{+}=E_{1}\cap\mathbb{N}$ and (``backward'')
asymptotic along $E_{2}^{-}=E_{2}\cap(-\mathbb{N})$, and $u,v$ differ
on a set all of whose traslates intersect both $E_{1}$ and $E_{2}$.
This will be enough to establish non-recurrence of $(u,v)$.

\subsection{\label{subsec:A-combinatorial-lemma}A combinatorial lemma}

Let $A$ be a finite alphabet and $L\subseteq A^{m}$. We say that
$L$ \textbf{admits a full binary tree }if there are sets $L_{i}\subseteq A^{i}$
for $0\leq i\leq m$, such that $L_{0}$ consists of the empty word,
$L_{m}=L$, and each $a\in L_{i}$ extends in exactly two ways to
$a',a''\in L_{i+1}$. Functions or random variables $W_{1},\ldots,W_{m}:\Omega\rightarrow A$
admit a full binary tree if their image does.
\begin{lem}
\label{lem:binary-trees}For every $0<\eta<1$ and $m\in\mathbb{N}$
there is an $\varepsilon>0$ so that the following holds.

Let $B$ be a finite set and $B_{1},\ldots,B_{m}\subseteq B$. Let
$W=(W_{1},\ldots,W_{m})$ be $B$-valued random variables, and $\mathcal{F}$
a $\sigma$-algebra in the underlying sample space. Assume 
\begin{enumerate}
\item $|B_{i}|>|B|^{\eta}$ .
\item $\mathbb{P}(W_{i}\in B_{i})>1-\varepsilon$ .
\item $\left|H(W|\mathcal{F})-\sum_{i=1}^{m}\log|B_{i}|\right|<\varepsilon\log|B|$.
\end{enumerate}
Then with probability $1-\eta$ over atoms $F\in\mathcal{F}$, the
restriction of $W_{1},\ldots,W_{m}$ to $F$ admits a full binary
tree.
\end{lem}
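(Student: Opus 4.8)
The plan is to induct on $m$, peeling off one coordinate at a time: the inductive step will reduce the existence of a depth-$m$ tree on an atom of $\mathcal{F}$ to the existence of depth-$(m-1)$ trees on the atoms of the refinement $\mathcal{F}'=\sigma(\mathcal{F},W_1)$. The output will be a full binary tree located \emph{inside} the words realized on $F$ (i.e. we construct $L_0,\ldots,L_m$ with $L_m$ among the realized words), which is the content we need.

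First I would carry out the entropy bookkeeping. By the chain rule $H(W\mid\mathcal{F})=\sum_{i=1}^m H(W_i\mid W_{<i},\mathcal{F})$ with $W_{<i}=(W_1,\ldots,W_{i-1})$, and a Fano-type estimate using hypothesis (2) gives $H(W_i\mid W_{<i},\mathcal{F})\le\log|B_i|+H(\varepsilon)+\varepsilon\log|B|$. Feeding the lower bound of (3) into this shows $\sum_i(\log|B_i|-H(W_i\mid W_{<i},\mathcal{F}))\le\varepsilon\log|B|$, and since each summand is bounded below by $-(H(\varepsilon)+\varepsilon\log|B|)$, every summand is at most $\delta:=\kappa(\varepsilon)\log|B|$ with $\kappa(\varepsilon)\to0$ as $\varepsilon\to0$ for fixed $m$. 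In particular $H(W_1\mid\mathcal{F})\ge\log|B_1|-\delta$, and since $H((W_2,\ldots,W_m)\mid\mathcal{F}')=H(W\mid\mathcal{F})-H(W_1\mid\mathcal{F})$, the variables $(W_2,\ldots,W_m)$ satisfy hypotheses (1)--(3) relative to $\mathcal{F}'$ with a slightly larger parameter $\varepsilon''$ (hypotheses (1),(2) transfer verbatim, the same $B_i$); and $\varepsilon''\to0$ as $\varepsilon\to0$ uniformly in $|B|$ because $\log|B|\ge\eta^{-1}\log2$.

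The analytic heart is a single-coordinate spreading estimate: for all but a small fraction of atoms $F$, the conditional law of $W_1$ on $F$ has no atom of mass exceeding $1-\eta/2$. I would prove this by combining (2) and (3). Writing $q(F)=\mathbb{P}(W_1\in B_1\mid F)$, Markov applied to (2) gives $q(F)\ge1-\sqrt\varepsilon$ off a set of measure $\le\sqrt\varepsilon$; on that set the elementary bound $H(W_1\mid F)\le H(q(F))+q(F)\log|B_1|+(1-q(F))\log|B|$ shows the deficit $\log|B_1|-H(W_1\mid F)$ is bounded below by $-(\sqrt\varepsilon\log|B|+H(\sqrt\varepsilon))$. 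This lower bound is the crucial point: it lets me apply Markov to the deficit (whose mean is $\le\delta$) without the ruinous loss that would otherwise arise from the deficit being as negative as $-(1-\eta)\log|B|$, and it renders the resulting bound uniform in $|B|$. Concluding, off a set of atoms of measure $\le\kappa'(\varepsilon)/\eta$ (with $\kappa'\to0$) the deficit is $\le(\eta/2)\log|B|$; plugging this into $H(W_1\mid F)\le H(\alpha)+(1-\alpha)\log|B|$ for the maximal atom $\alpha$ then forces $\alpha\le1-\eta/2$, using $\log|B_1|\ge\eta\log|B|$ (here we may assume $|B|$ large, so that $H(\alpha)/\log|B|$ is negligible; the bounded-$|B|$ case is easier since then $\delta<\log2$ forces near-exact uniformity). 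The same estimate applied to $\mathcal{F}_i=\sigma(\mathcal{F},W_{<i})$ is what the induction invokes at each node.

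With these in hand the induction closes. For $m=1$ the spreading estimate gives $\alpha\le1-\eta/2<1$, so $W_1$ is non-constant off an $\eta$-fraction of atoms, i.e. its image has at least two points. For the step, apply the inductive hypothesis to $(W_2,\ldots,W_m)$ and $\mathcal{F}'$ with parameter $\eta'\le\eta^2/16$: the atoms of $\mathcal{F}'$ not admitting a depth-$(m-1)$ tree (the ``bad children'') have total measure $\le\eta'$, so Markov shows that for all but an $\eta/4$-fraction of atoms $F$ the conditional mass of bad children $F\cap\{W_1=b\}$ is $\le\eta/4$. Intersecting with the spreading estimate at the root, for $\ge1-\eta$ of the atoms $F$ one has simultaneously that the good children carry mass $>1-\eta/4$ while no single child carries mass $>1-\eta/2$; hence there are at least two good children. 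Grafting their depth-$(m-1)$ trees under $W_1$ yields the desired depth-$m$ tree. The main obstacle is precisely this last point---securing \emph{two} good children rather than merely positive good mass---which is why both hypotheses are essential: (2) together with the near-maximal entropy (3) rules out a dominant child, while the inductive bound makes the bad children negligible. Throughout one chooses $\varepsilon$ small enough, depending only on $\eta$ and $m$, that $\kappa(\varepsilon),\kappa'(\varepsilon),\eta'$ are as small as required and that $(W_2,\ldots,W_m)$ still meets the level-$(m-1)$ hypotheses.
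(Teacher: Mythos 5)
Your proof is correct and follows essentially the same route as the paper's: induction on $m$ peeling off $W_1$, the chain rule to transfer hypothesis (3) to $(W_2,\ldots,W_m)$ relative to $\mathcal{F}\vee\sigma(W_1)$, an entropy-based ``no dominant value'' estimate for $W_1$ on most atoms, and a Markov argument producing two good children whose trees are grafted together. The only cosmetic difference is that the paper first restricts to the event $\{W_i\in B_i\ \forall i\}$ so that all entropy deficits become non-negative, whereas you carry Fano-type error terms throughout (and your passing remark that $\log|B|\ge\eta^{-1}\log 2$ should just be $\log|B|\ge\log 2$, which is all the absorption of $H(\varepsilon)$ actually requires).
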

If we assume in addition that $|B|$ is bounded, then the conditional
distribution of $W_{1}^{m}$ on the atoms of $\mathcal{F}$ will tend
to independent as $\varepsilon\rightarrow0$, and the conclusion is
trivial. But in general there is no such implication. The example
to have in mind is when $(X_{n})_{n=-\infty}^{\infty}$ is an ergodic
process of positive entropy and $W_{i}$ are consecutive blocks of
$n$ variables, $W_{i}=X_{in}\ldots X_{(i+1)n-1}$. If $\mathcal{F}$
is trivial, or a factor algebra that does not exhaust the entropy,
then for small $\eta>0$ and every $m\in\mathbb{N}$ and $\varepsilon>0$,
the random variables $W_{1},\ldots,W_{m}$ will satisfy the hypotheses
of the lemma if $n$ is large enough.
\begin{proof}

Let $E$ denote the event $W_{i}\in B_{i}$ for all $i$. We claim
that we can assume that $\mathbb{P}(E)=1$. By (1), the entropy of
each $W_{i}$ on $E^{c}$ is bounded by $\log|B|<(1/\eta)\log|B_{i}|$,
and by (2), $\mu(E^{c})\leq m\varepsilon$. Thus, restricting $W$
and $\mathcal{F}$ to $E$ changes $H(W|\mathcal{F})$ by no more
than $\varepsilon'\sum\log|B_{i}|$ for an $\varepsilon'$ that can
be made arbitrarily small by decreasing $\varepsilon$, so after restricting,
(3) still holds for a slightly larger $\varepsilon$. 

Now proceed by induction on $m$. When $m=1$, we must show that,
for a $1-\eta$ fraction of $F\in\mathcal{F}$, the variable $W_{1}$
is not a.s. constant on $F$, or equivalently that the conditional
entropy of $W_{1}$ on $F$ is positive. Since $W_{1}\in B_{1}$ we
have a pointwise upper bound $\log|B_{1}|$ on the entropy of $W_{1}$
on each atom of $\mathcal{F}$, and these conditional entropies average
to $H(W_{1}|\mathcal{F})$, which by (3) and (1) is at least $\log|B_{1}|-\varepsilon\log|B|>(1-\varepsilon/\eta)\log|B_{1}|$.
As $\varepsilon$ decreases, these upper (pointwise) and lower (average)
bounds approach each other, so, for small enough $\varepsilon$, on
a fraction of atoms approaching full measure, we get a lower pointwise
bound of the same magnitude. 

For $m>1$, our assmption $W_{i}\in B_{i}$ implies the trivial bounds
$H(W_{1}|\mathcal{F})\leq\log|B_{1}|$ and $H(W_{2}^{m}|\mathcal{F}\lor\sigma(W_{1}))\leq\sum_{i=2}^{m}\log|B_{i}|$.
Using the chain rule for entropy, (3) becomes 
\[
\left|\left(\log|B_{1}|-H(W_{1}|\mathcal{F})\right)+\left(\sum_{i=2}^{m}\log|B_{i}|-H(W_{2}^{m}|\mathcal{F}\lor\sigma(W_{1}))\right)\right|<\varepsilon\log|B|
\]
Since both summands on the left are non-negative, the bound $\varepsilon\log|B|$
applies to both, and this is condition (3) for the sequence $W_{2}^{m}$
and algebra $\mathcal{F}\lor\sigma(W_{1})$. Thus, for $\varepsilon$
small, we can apply the induction hypothesis to $W_{2}^{m}$ and $\mathcal{F}\lor\sigma(W_{1})$
with parameters $\eta^{2}/4$ instead of $\eta$ (the decreasing $\eta$
does not invalidate (1)).

We conclude that on $1-\eta^{2}/4$ of the atoms of $\mathcal{F}\lor\sigma(W_{1})$,
the restriction of $W_{2}^{m}$ admit a full binary tree. Thus, on
$1-\eta/2$ of the atoms $F\in\mathcal{F}$, at least $1-\eta/2$
of the atoms $G\in\sigma(W_{1})$ (with respect to the conditional
measure on $F$) are such that $W_{2}^{m}$ admits a full binary tree
on $F\cap G$. Also, arguing as in the case $m=1$, on a $1-\eta/2$
fraction of atoms $F\in\mathcal{F}$, the conditional distribution
of $W_{1}$ has large entropy, and in particular does not take any
single value with probability higher than $1-\eta/2$. It follows
that, with probability $1-\eta$ over $F\in\mathcal{F}$, there are
two atoms $G_{1},G_{2}\in\sigma(W_{1})$ such that $W_{2}^{m}$ admit
a full binary tree on $F\cap G_{1}$, $F\cap G_{2}$, and therefore
$W_{1}^{m}$ admits a full binary tree on $F$.
\end{proof}
\begin{cor}
\label{cor:separated-point-from-high-entropy-sequences}If $L\subseteq A^{m}$
admits a binary tree then there are $u,v\in L$ such that $u_{i}\neq v_{i}$
for $i=1,\ldots,m$. In particular, in the setting of Lemma \ref{lem:binary-trees},
there exist realizations $u,v$ of $W_{1}^{m}$ satisfying the above
and coming from the same atom of $\mathcal{F}$.
\end{cor}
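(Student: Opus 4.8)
The plan is to prove the combinatorial assertion (the existence of coordinate-wise distinct $u,v\in L$) by constructing the two words greedily down the binary tree, and then to read off the ``in particular'' statement as an immediate consequence of Lemma~\ref{lem:binary-trees}.

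For the first part, I would build $u$ and $v$ one coordinate at a time by induction on the depth $i$, maintaining the invariant that the length-$i$ prefixes $a:=u_{1}^{i}$ and $b:=v_{1}^{i}$ both lie in $L_{i}$ and satisfy $a_{j}\neq b_{j}$ for every $1\leq j\leq i$. At depth $0$ both prefixes are the empty word and the condition is vacuous. For the inductive step, suppose we have reached nodes $a,b\in L_{i}$ with the invariant. By the definition of a full binary tree, $a$ has exactly two extensions in $L_{i+1}$; since they are distinct and both share the prefix $a$, they must differ in coordinate $i+1$, so the set $S_{a}$ of values their $(i+1)$-st coordinate can take has exactly two elements. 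Likewise $b$ yields a two-element set $S_{b}$ of possible $(i+1)$-st coordinates. The crux is then to select one extension of $a$ and one of $b$ whose $(i+1)$-st coordinates disagree.

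This selection is always possible, and is the only point requiring an argument: given any two two-element sets $S_{a},S_{b}$, there exist $\alpha\in S_{a}$ and $\beta\in S_{b}$ with $\alpha\neq\beta$. Indeed, were no such pair to exist, then fixing two distinct $\alpha_{1},\alpha_{2}\in S_{a}$ and any $\beta\in S_{b}$ would force $\alpha_{1}=\beta=\alpha_{2}$, a contradiction. Choosing the corresponding extensions of $a$ and $b$ preserves the invariant at depth $i+1$, and after $m$ steps we arrive at $u,v\in L_{m}=L$ with $u_{i}\neq v_{i}$ for all $i$, as required. I do not expect any genuine obstacle here; the only mild subtlety is noticing that the two branches issuing from a node are \emph{forced} to differ in the newly added coordinate, which is exactly what the definition of a full binary tree supplies.

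For the ``in particular'' clause, I would invoke Lemma~\ref{lem:binary-trees}: since its conclusion holds for a $(1-\eta)$-fraction of atoms and $1-\eta>0$, there is at least one atom $F\in\mathcal{F}$ on which the restriction of $W_{1},\ldots,W_{m}$ admits a full binary tree. Applying the first part to the corresponding set $L\subseteq A^{m}$ of realizations over $F$ produces $u,v\in L$ differing in every coordinate; by construction these are realizations of $W_{1}^{m}$ coming from the single atom $F$, which completes the proof.
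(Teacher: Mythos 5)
Your proof is correct and takes essentially the same greedy tree-descent approach as the paper; the only cosmetic difference is that you construct $u$ and $v$ in parallel (hence need the small observation about two two-element sets), whereas the paper fixes $u\in L$ arbitrarily and greedily builds only $v$, choosing at each level the extension whose last symbol differs from the corresponding symbol of $u$.
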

\begin{proof}
If $L\subseteq A^{m}$ admits a full binary tree, let $L_{i}\subseteq A^{i}$
be as in the definition. Choose any $u\in L_{m}$ and construct $v\in L_{m}$
inductively: start with the empty word and, having constructed a word
$v^{(i)}\in L_{i}$, let $v^{(i+1)}\in L_{i+1}$ be an extension whose
last symbol is different from $u_{i}$; one exists since there are
two ways to distinct extensions in $L_{i+1}$. Set $v=v^{(m)}.$
\end{proof}

\subsection{\label{subsec:symbolic-case}Proof of the theorem in the symbolic
case}

In this section we prove that if $\Omega\subseteq A^{\mathbb{Z}}$
is a subshift of positive entropy then it is not doubly recurrent.
By the variational principle, we may fix an ergodic shift invariant
measure on $\Omega$. Taking $X_{n}:\Omega\rightarrow A$ to be the
coordinate projections, $(X_{n})_{n=-\infty}^{\infty}$ becomes an
$A$-valued stationary ergodic process of positive entropy, and our
goal is to find two realizations $u,v$ of $(X_{n})$ such that $(u,v)$
is not recurrent in $A^{\mathbb{Z}}\times A^{\mathbb{Z}}$.

For $n\in\mathbb{N}$, define an $n$-interval to be a set of the
form $I=[kn,(k+1)n)$ with $k\in\mathbb{Z}$, and say that $I$ is
odd or even according to the parity of $k$. Suppressing the parameter
$n$ from the notation, let 
\[
Y_{i}=(X_{2in},\ldots,X_{(2i+1)n-1})\qquad,\qquad Z_{i}=(X_{(2i+1)n},\ldots,X_{(2i+1)n-1})
\]
be the blocks of variables in even and in odd $n$-intervals, respectively.
Then $\mathbb{Z}$ decomposes into disjoint $n$-intervals which are
alternately odd and even, and similarly 
\[
X_{-\infty}^{\infty}=(\ldots,Y_{-2},Z_{-2},Y_{-1},Z_{-1},Y_{0},Z_{0},Y_{1},Z_{1},\ldots)
\]

Let $h>0$ denote the entropy of the process $(X_{n})$, so that
\[
\frac{1}{n}H(X_{1}^{n})=h+o(1)
\]
(all error terms are asymptotic as $n\rightarrow\infty$). It is not
hard to see that for large $n$, the processes $(Y_{i})$ and $(Z_{i})$
are roughly independent, in the sense that the entropy of each is
$\frac{1}{2}(h+o(1))$, and their joint entropy is $h$. We will not
use this property directly, although it is implicit in the proof below.
Set
\begin{align*}
E^{-} & =\bigcup\{\text{even \ensuremath{n}-intervals to the left of \ensuremath{0}\}}\\
E^{+} & =\bigcup\{\text{odd \ensuremath{n}-intervals to the right of \ensuremath{0}\}}
\end{align*}

\begin{lem}
\label{lem:main-lemma-double-recurrence}$H(X_{-n}^{n-1}|\{X_{i}\}_{i\in E^{-}\cup E^{+}})=H(Z_{-1},Y_{0}|Y_{-\infty}^{-1},Z_{0}^{\infty})=2n(h+o(1))$
.
\end{lem}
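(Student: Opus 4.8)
The first equality is merely a change of notation: under the blocking, $X_{-n}^{n-1}$ is exactly the pair $(Z_{-1},Y_0)$ (the odd interval $[-n,0)$ followed by the even interval $[0,n)$), while $\{X_i\}_{i\in E^-\cup E^+}$ is generated by the even blocks strictly to the left of $0$, namely $Y_{-\infty}^{-1}$, together with the odd blocks weakly to the right of $0$, namely $Z_0^\infty$. So it suffices to prove the numerical estimate, and I would establish the two inequalities separately. The upper bound is immediate: conditioning only decreases entropy, so
\[
H(X_{-n}^{n-1}\mid\{X_i\}_{i\in E^-\cup E^+})\le H(X_{-n}^{n-1})=H(X_1^{2n})=2n(h+o(1))
\]
by stationarity and the definition of the entropy rate.

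For the lower bound, write $\mathcal{G}=\sigma(Y_{-\infty}^{-1},Z_0^\infty)$. Since $Y_{-\infty}^{-1}\subseteq\sigma(X_i:i\le-n-1)$ and $Z_0^\infty\subseteq\sigma(X_i:i\ge n)$, we have $\mathcal{G}\subseteq\sigma(X_{-\infty}^{-n-1},X_n^\infty)$, and conditioning on the larger algebra only lowers entropy, so
\[
H(X_{-n}^{n-1}\mid\mathcal{G})\ge H(X_{-n}^{n-1}\mid X_{-\infty}^{-n-1},X_n^\infty).
\]
I would then expand the right-hand side by the chain rule, peeling off $X_j$ for $j=-n,\dots,n-1$ and absorbing each predecessor into the past, obtaining $\sum_{j=-n}^{n-1}H(X_j\mid X_{-\infty}^{j-1},X_n^\infty)$. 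By stationarity the $j$-th term equals $H(X_0\mid X_{-\infty}^{-1},X_{n-j}^\infty)=h-a_{n-j}$, where $a_m:=I(X_0;X_m^\infty\mid X_{-\infty}^{-1})$. Summing gives
\[
H(X_{-n}^{n-1}\mid X_{-\infty}^{-n-1},X_n^\infty)=2nh-\sum_{m=1}^{2n}a_m,
\]
so everything reduces to showing that the defect $\sum_{m=1}^{2n}a_m$ is $o(n)$.

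The crux, and the one genuinely non-elementary step, is precisely this. The quantities $a_m$ are non-negative and decreasing in $m$ (enlarging the future only increases the information it carries about $X_0$), so $a_m\downarrow a_\infty:=I(X_0;\mathcal{T}^+\mid X_{-\infty}^{-1})$, where $\mathcal{T}^+=\bigcap_m\sigma(X_m^\infty)$ is the remote future. By Cesàro it therefore suffices to show $a_\infty=0$, i.e.\ that the present is conditionally independent of the remote future given the whole past. Here I would invoke ergodicity through the Rokhlin--Sinai theory: the remote future $\mathcal{T}^+$ coincides (mod $\mu$) with the Pinsker $\sigma$-algebra, which is time-symmetric and hence also equals the remote past $\bigcap_m\sigma(X_i:i\le-m)\subseteq\sigma(X_{-\infty}^{-1})$. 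Thus $\mathcal{T}^+$ is already measurable with respect to the past, $X_0\perp\mathcal{T}^+$ given $X_{-\infty}^{-1}$, and $a_\infty=0$. (In the symbolic setting the coordinate partition is finite, so finiteness of the entropy poses no issue.) This vanishing is exactly the rigorous form of the heuristic near-independence of $(Y_i)$ and $(Z_i)$ alluded to above, and it is the step I expect to require the most care.

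Combining the two bounds, $H(X_{-n}^{n-1}\mid\mathcal{G})$ lies between $2nh-o(n)$ and $2nh+o(n)$, which is the asserted $2n(h+o(1))$.
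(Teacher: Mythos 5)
Your handling of the first (notational) equality, the upper bound, and the chain\mbox{-}rule bookkeeping that reduces everything to $\sum_{m=1}^{2n}a_m=o(n)$ are all fine. The fatal problem is the very first step of your lower bound: replacing $\mathcal{G}=\sigma(Y_{-\infty}^{-1},Z_0^\infty)$ by the larger algebra $\sigma(X_{-\infty}^{-n-1},X_n^\infty)$. This discards exactly the structure that makes the lemma true. By Ornstein and Weiss's theorem that every transformation is bilaterally deterministic (the reference \cite{OrnsteinWeiss1975} invoked in the remark right after the lemma), every finite-entropy ergodic system admits a finite generating partition for which $\sigma(X_{-\infty}^{-m},X_m^\infty)$ is the full $\sigma$-algebra for every $m$. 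For such a process $H(X_{-n}^{n-1}\mid X_{-\infty}^{-n-1},X_n^\infty)=0$ for every $n$ even though $h>0$, so the quantity you bound from below can vanish identically and your argument cannot return $2nh-o(n)$. Correspondingly, your key claim $a_\infty=0$ is false: for such a generator $H(X_0\mid X_{-\infty}^{-1},X_m^\infty)=0$ and hence $a_m=h$ for all $m$. The precise point where your justification breaks is the identification $a_\infty=I(X_0;\mathcal{T}^+\mid X_{-\infty}^{-1})$: reverse martingale convergence gives $a_m\downarrow h-H\bigl(X_0\mid\bigcap_m(\sigma(X_{-\infty}^{-1})\vee\sigma(X_m^\infty))\bigr)$, and the intersection of the joins is in general strictly larger than $\sigma(X_{-\infty}^{-1})\vee\bigcap_m\sigma(X_m^\infty)$ --- in the bilaterally deterministic example the former is everything while the latter is just the past. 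The Rokhlin--Sinai identification of the remote future with the Pinsker algebra is correct, but it does not license this exchange. This is exactly the trap the paper warns about in the paragraph following the lemma statement.

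The lemma survives only because $E^-\cup E^+$ is asymmetric: to the left of $0$ one conditions only on the \emph{even} $n$-blocks, to the right only on the \emph{odd} ones, so each side has density $1/2$ with gaps of length $n$ and the two sides occupy complementary residues. The paper's proof exploits this. It splits $H(Z_{-1},Y_0\mid Y_{-\infty}^{-1},Z_0^\infty)\ge H(Y_0\mid Y_{-\infty}^{-1},Z_{-\infty}^\infty)+H(Z_{-1}\mid Y_{-\infty}^{\infty},Z_0^\infty)$ and evaluates each term by an entropy budget: writing $H(Y_{-\ell}^{\ell-1},Z_{-\ell}^{\ell-1})=H(Z_{-\ell}^{\ell-1})+\sum_i H(Y_i\mid Y_{-\ell}^{i-1},Z_{-\ell}^{\ell-1})$, the left side is $4\ell n(h+o(1))$ while $H(Z_{-\ell}^{\ell-1})\le 2\ell n(h+o(1))$, which forces the remaining summands to average to $n(h+o(1))$; combined with stationarity, monotonicity under conditioning and martingale convergence this pins down $H(Y_0\mid Y_{-\infty}^{-1},Z_{-\infty}^\infty)=n(h+o(1))$. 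Note that the chain rule there conditions each $Y_i$ only on \emph{past} $Y$'s (together with all $Z$'s); that asymmetry is what lets the telescoping match the total entropy budget. To salvage your approach you would have to keep the conditioning algebra comparable to $\mathcal{G}$ throughout and run the chain rule against such a budget, which essentially reproduces the paper's argument.
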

In the lemma we have conditioned on both past and future times, and
it is important to note that in general this can lead to a sharp decrease
in entropy, even when the density of the times is small. For example,
if one takes the even blocks in both directions, it can happen that
$H(Y_{0}|Y_{-\infty}^{\infty})=0$ for all $n$, as can be shown using
a construction similar to that in \cite{OrnsteinWeiss1975}. The validity
of the lemma relies crucially on the asymmetry between the blocks
that we condition on in the past and in the future.
\begin{proof}
[Proof of the Lemma.]The inequality $\leq$ is trivial since 
\[
H(X_{-n}^{n-1}|Y_{-\infty}^{-1},Z_{0}^{\infty})\leq H(X_{-n}^{n-1})=2n(h+o(1))
\]
Thus, we need only prove $\geq$. Note that $X_{-n}^{n-1}=(Z_{-1},Y_{0})$
and
\begin{align*}
H(Z_{-1},Y_{0}|Y_{-\infty}^{-1},Z_{0}^{\infty}) & =H(Y_{0}|Y_{-\infty}^{-1},Z_{0}^{\infty})+H(Z_{-1}|Y_{-\infty}^{0},Z_{0}^{\infty})\\
 & \geq H(Y_{0}|Y_{-\infty}^{-1},Z_{-\infty}^{\infty})+H(Z_{-1}|Y_{-\infty}^{\infty},Z_{0}^{\infty})
\end{align*}
It suffices to show that each of the summands on the right hand side
are $n(h+o(1))$. We prove this for the first summand, the second
is similar.

Fix a large integer $\ell$, and note that
\begin{align*}
4\ell n(h+o(1)) & =H(X_{-2\ell n},X_{-\ell n+1},\ldots,X_{2\ell n-1})\\
 & =H(Y_{-\ell},Z_{-\ell},Y_{-\ell+1},Z_{-\ell+1},\ldots,Y_{\ell-1},Z_{\ell-1})\\
 & =H(Z_{-\ell}^{\ell-1})+H(Y_{-\ell}^{\ell-1}|Z_{-\ell}^{\ell-1})\\
 & =H(Z_{-\ell}^{\ell-1})+\sum_{i=-\ell}^{\ell-1}H(Y_{i}|Y_{-\ell}^{i-1},Z_{-\ell}^{\ell-1})\\
 & =2\ell n(h+o(1))+\sum_{i=-\ell}^{\ell-1}H(Y_{0}|Y_{-\ell-i}^{-1},Z_{-\ell-i}^{\ell-i-1})
\end{align*}
Rearranging and dividing by $2\ell$, we have
\[
\frac{1}{2\ell}\sum_{i=-\ell}^{\ell-1}H(Y_{0}|Y_{-\ell-i}^{-1},Z_{-\ell-i}^{\ell-i-1})=n(h+o(1))
\]
On the other hand, by Martingale convergence and monotonicity of entropy
under conditioning,
\[
\lim_{k,m\rightarrow\infty}H(Y_{0}|Y_{-k}^{-1},Z_{-k}^{m})=\inf_{k,m}H(Y_{0}|Y_{-k}^{-1},Z_{-k}^{m})=H(Y_{0}|Y_{-\infty}^{-1},Z_{-\infty}^{\infty})
\]
inserting this in the previous equation and letting $\ell\rightarrow\infty$
gives
\[
H(Y_{0}|Y_{-\infty}^{-1},Z_{-\infty}^{\infty})=n(h+o(1))
\]
as required.
\end{proof}
Returning to the proof of the theorem, split $[-n,n)$ into disjoint
intervals $I_{1},\ldots,I_{8}$ of length $n/4$ (we can assume $n/4\in\mathbb{N}$),
and note that every sub-interval $J\subseteq[-n,n)$ of length $n/2$
contains one of the $I_{i}$. Let 
\[
W_{i}=(X_{p})_{p\in I_{i}}\qquad i=1,\ldots,8
\]
We would like to apply Corollary \ref{cor:separated-point-from-high-entropy-sequences}
to $W_{1},\ldots,W_{8}$ and the $\sigma$-algebra $\mathcal{F}=\sigma(Y_{-\infty}^{-1},Z_{0}^{\infty})$,
with suitable sets $B,B_{1},\ldots,B_{8}$. To set things up, apply
the Shannon-MacMillan theorem to find a set $A'\subseteq A^{n/4}$
of size 
\[
|A'|=2^{(h+o(1))n/4}=|A|^{(c+o(1))n}
\]
for $c=\log2/(4\log|A|)>0$, and satisfying
\[
\sum_{a\in A'}\mu(a)=1-o(1)
\]
Set $\eta=c/2$, $B=A^{n/4}$ and $B_{1},\ldots,B_{8}=A'$ in the
hypothesis of Corollary \ref{cor:separated-point-from-high-entropy-sequences}.
Also, by Lemma \ref{lem:main-lemma-double-recurrence},
\begin{align*}
H(W_{1}^{8}|Y_{-\infty}^{-1},Z_{0}^{\infty})=H(X_{-n}^{n-1}|Y_{-\infty}^{-1},Z_{0}^{\infty}) & =2(h-o(1))n
\end{align*}
but we also have $H(W_{i})=(h-o(1))\cdot n/2$, so
\[
H(W_{1}^{8}|Y_{-\infty}^{-1},Z_{0}^{\infty})\leq\sum_{i=1}^{8}H(W_{i})\leq2(h+o(1))n
\]
Combining these bounds,
\[
\left|H(W_{1}^{8})-\sum_{i=1}^{8}H(W_{i})\right|=o(n)
\]
which, assuming $n$ is large, completes the hypotheses of Corollary
\ref{cor:separated-point-from-high-entropy-sequences}. 

The corollary now provides us with samples $u,v$ for the process
satisfying
\begin{itemize}
\item $u,v$ are realized on the same atom of $\mathcal{F}=\sigma(Y_{-\infty}^{-1},Z_{0}^{\infty})$,
hence $u,v$ agree on every $n$-interval in $E^{-}\cup E^{+}$.
\item $u|_{I_{i}}\neq v|_{I_{i}}$ for $i=1,\ldots,8$. That is, for each
$i$, there is $j\in I_{i}$ with $u_{j}\neq v_{j}$. 
\end{itemize}
To conclude the proof we must show that $(u,v)$ is not recurrent.
Indeed, let $(u',v')$ be a shift of $(u,v)$ by $k$, with $|k|>2n$.
\begin{itemize}
\item Since $E^{-},E^{+}$ consist of $n$-intervals separated by gaps of
length $n$, after shifting $E^{-}\cup E^{+}$ by $k$, some $n$-interval
$J\subseteq(E^{-}\cup E^{+})+k$ will intersect $[-n,n)$ in a set
of size at least $n/2$, implying that $u'_{j}=v'_{j}$ for all $j\in J\cap[-n,n)$. 
\item Every interval of length $n/2$ in $[-n,n]$, and in particular the
interval $J\cap[-n,n)$, contains one of the $I_{i}$. Thus, there
is then a $j\in I_{i}\subseteq J$ such that $u_{j}\neq v_{j}$. 
\end{itemize}
It follows that $(u,v)$ and $(u',v')$ differ on at least one coordinate
$j\in[-n,n)$. This holds for all shifts $(u',v')$ of $(u,v)$ by
$2n$ or more, so $(u,v)$ is not recurrent.

\subsection{\label{subsec:The-general-case}The general case}

Let $(X,T)$ be a topological system and $\mu$ a $T$-invariant Borel
probability measure of positive entropy. Our plan is as follows.

First, we pass to a measure-theoretic factor $(X,\mu)\rightarrow(Y,\nu)$,
where $Y$ is a subshift over a countable alphabet (for concreteness,
$Y\subseteq\mathbb{N}^{\mathbb{Z}}$), and the entropy of $Y$ is
positive and finite. This factor will be defined using a partition
$\mathcal{C}=\{C_{1},C_{2},\ldots\}$ of $X$ (modulo $\mu$) into
closed sets that are separated from each other in the sense that,
for every $n$, no pair of atoms $C_{i},C_{j}$ with $i,j<n$ can
be simultaneously $\delta_{n}$-close to a third atom. 

Next, pass to a further factor $(Y,\nu)\rightarrow(Z,\theta)$, chosen
so that $Y$ has a finite generator relative to $Z$, and, conditioned
on $Z$, the relative entropy is positive. This factor is obtained
by merging a large finite number of symbols $1,\ldots,r$ in the alphabet
of $Y$.

We can now run a relative version of the argument from the symbolic
case on $Y$, conditioned on $Z$. This yields a pair of points $u,v\in Y$
that lie above the same point in $Z$, and such that $(u,v)$ is not
recurrent in $Y\times Y$. More precisely, the lack of recurrence
arises because every large enough shift $(u',v')$ of $(u,v)$ admits
an index $j\in[-n,n)$ at which $u',v'$ display a common symbol $k$,
while $u,v$ display distinct symbols from among $1,\ldots,r$.

Finally, taking preimages $x,y\in X$ of $u,v\in Y$, respectively,
we find that any large enough shift $(x',y')$ of $(x,y)$ can be
brought, after another bounded shift, into an atom $C_{k}$, whereas
the corresponding shifts of $x,y$ lie in distinct atoms from among
$C_{1},\ldots,C_{r}$. The separation properties of $\mathcal{C}$
now ensure that $(x',y')$ is at least $\delta_{r}$-far from $(x,y)$,
which is non-recurrence.

We now give this argument in detail.

\subsubsection*{Step 1: Constructing the factor $X\rightarrow Y$}

We inductively construct a sequence of disjoint closed sets $C_{1},C_{2},\ldots\subseteq X$
that exhaust the measure $\mu$.

Begin with a finite measurable partition $\mathcal{A}=\{A_{1},\ldots,A_{n_{1}}\}$
of positive entropy and $\varepsilon>0$, and choose closed sets $C_{i}\subseteq A_{i}$
such that $\mu(\bigcup_{i=1}^{n_{1}}C_{i})>1-\varepsilon_{1}$.

Set $n_{k}=2^{k-1}n_{1}$ and suppose that after $k$ steps we have
defined disjoint closed sets $C_{1},\ldots,C_{n_{k}}$. Let $\varepsilon_{k+1}$
be given. For $i=1,\ldots,n_{k}$ let $A_{i}^{k},\subseteq X$ denote
the measurable sets that form the ``Voronoi anuli'' of the sets
$C_{1},\ldots,C_{n_{k}}$ :
\[
A_{i}^{k}=\left\{ x\in X\left|\begin{array}{c}
d(x,C_{i})<d(x,C_{j})\text{ for }j<i\text{ and }\\
d(x,C_{i})\leq d(x,C_{j})\text{ for }j>i
\end{array}\right.\right\} \setminus C_{i}
\]
The sets $A_{1}^{k},\ldots,A_{n_{k}}^{k}$ are measurable, pairwise
disjoint, and together with $C_{1},\ldots,C_{n_{k}}$ they form a
partition of $X$. Now choose closed sets $C_{n_{k}+i}\subseteq A_{i}^{k}$,
$i=1,\ldots,n_{k}$, satisfying $\mu(\bigcup_{i=1}^{n_{k+1}}C_{i})>1-\varepsilon_{k+1}$.
We have defined $C_{1},\ldots,C_{n_{k+1}}$.

Let $\mathcal{C}=\{C_{1},C_{2},\ldots\}$ denote the resulting family
of sets. Observe that:
\begin{enumerate}
\item If $\varepsilon_{k}\rightarrow0$ then $\mathcal{C}$ is a partition
of $X$ up to $\mu$-null sets.
\item If $\varepsilon_{k}\rightarrow0$ quickly enough, $H_{\mu}(\mathcal{C})<\infty$.
In particular, $h_{\mu}(T,\mathcal{C})<\infty$ .

Proof: The $n_{k}=2^{n}n_{1}$ partition elements added at stage $k$
of the construction contribute at most $\varepsilon_{k}\cdot k\log n_{1}$
to the total entropy, so taking $\varepsilon_{k}=1/k^{3}$, for example,
gives $H_{\mu}(\mathcal{C})\leq\sum_{k}\frac{1}{k^{2}}\log n_{1}<\infty$.
\item If $\varepsilon_{k}\rightarrow0$ quickly enough and $\varepsilon_{1}$
is small enough, then $h_{\mu}(T,\mathcal{C})>0$ .

Proof: For any $\varepsilon_{2},\varepsilon_{3},\ldots$ such that
$H_{\mu}(\mathcal{C})<\infty$, the Rohlin distance $H(\mathcal{A}|\mathcal{C})+H(\mathcal{C}|\mathcal{A})$
tends to zero as $\varepsilon_{1}\rightarrow0$, hence $h_{\mu}(T,\mathcal{C})\rightarrow h_{\mu}(T,\mathcal{A})>0$
\item For every $\ell\neq m$ , 
\[
\delta_{m,\ell}=\inf_{i\in\mathbb{N}}\left\{ \max\{d(C_{\ell},C_{i}),d(C_{m},C_{i})\}\right\} >0
\]
Proof: Suppose $1\leq\ell,m<n_{k}$. Because the sets $C_{1},\ldots,C_{n_{k}}$
are closed and disjoint, the infimum above is positive as $i$ ranges
over $\{1,\ldots,n_{k}\}$. For $i>n_{k}$, each $C_{i}$ is contained
in the Voronoi cell of exactly one of the previously defined sets,
and in particular either $d(C_{i},C_{\ell})\geq\frac{1}{2}d(C_{\ell},C_{m})$
or $d(C_{i},C_{m})\geq\frac{1}{2}d(C_{\ell},C_{m})$. Thus, the infimum
is positive over all $i$.
\end{enumerate}
Assume that $\varepsilon_{k}$ have been chosen so that properties
(1)-(3) above are satisfied, and let $\pi:X\rightarrow\mathbb{N}^{\mathbb{Z}}$
be the associated measurable map, 
\[
\pi(x)_{i}=m\qquad\Longleftrightarrow\qquad T^{i}x\in C_{m}
\]
Let $\nu=\pi\mu$ be the push-forward measure, which is invariant
under the shift $S$ on $\mathbb{N}^{\mathbb{Z}}$. Then 
\[
h_{\nu}(S)=h_{\mu}(T,\mathcal{C})\in(0,\infty)
\]

\subsubsection*{Step 2 : Constructing the factor $Y\rightarrow Z$}

Let $\mathcal{D}$ denote the cylinder partition of $\mathbb{N}^{\mathbb{Z}}$
and let $\mathcal{D}^{(r)}$ denote the partition of $Y$ obtained
by merging the first $r$ symbols into a single atom. Since $H_{\nu}(\mathcal{D})<\infty$,
\[
H_{\nu}(\mathcal{D}^{(r)})\rightarrow0\qquad\text{as }r\rightarrow\infty
\]
and since $h_{\nu}(S)>0$, we can choose $r$ large enough that $H_{\nu}(D^{(r)})<h_{\nu}(S)$,
hence
\[
h_{\nu}(S,\mathcal{D}^{(r)})<h_{\nu}(S)
\]

Consider the factor algebra $\mathcal{E}=\bigvee_{i\in\mathbb{Z}}S^{-i}\mathcal{D}^{(r)}$.
Evidently,
\[
h_{\nu}(S|\mathcal{E})=h_{\nu}(S)-h(S|_{\mathcal{E}})>0
\]
Let $\mathcal{B}$ denote the partition of $\mathbb{N}^{\mathbb{Z}}$
obtained by identifying all symbols $r+1,r+2,\ldots$, and observe
that $\mathcal{D}=\mathcal{B}\lor\mathcal{D}^{(r)}$, so $\mathcal{B}$
generates relative to $\mathcal{E}$, hence
\[
h_{\nu}(S,\mathcal{B}|\mathcal{E})=h_{\nu}(S|\mathcal{E})>0
\]

\subsubsection*{Step 3 : Applying the symbolic argument}

We can now run the entire argument from the finite-alphabet case for
the process $(X_{i})$ determined by $\mathcal{B}$, but conditioning
the whole while on $\mathcal{E}$. We will then find an $n$ and samples
$u,v\in\mathbb{N}^{\mathbb{Z}}$ of the process which lie in the same
atom of $\sigma(Y_{-\infty}^{-1},Z_{1}^{\infty})\lor\mathcal{E}$,
and with the property that if $(u',v')$ is a shift of $(u,v)$ by
more than $n$ in either direction, there exists $-n\leq j\leq n$
such that $u_{j}\neq v_{j}$ and $u'_{j}=v'_{j}$. Importantly, $u_{j}\neq v_{j}$
implies $u_{j},v_{j}\in\{1,\ldots,n\}$, because otherwise they would
not lie in the same atom of $\mathcal{E}$.

We must adjust one minor point in this argument: we should work with
the process defined by the partition $\pi^{-1}\mathcal{B}$ on $(X,\mu,T)$
instead of by $\mathcal{B}$ on $(\mathbb{N}^{\mathbb{Z}},\nu,S)$,
and the algebra $\pi^{-1}\mathcal{E}$. The two processes have the
same distribution, but this change ensures that the resulting samples
$u,v$ lie in the image of $\pi$. We remark, that the fact that the
process is not defined on a subshift is not a problem: the proof in
Section \ref{subsec:symbolic-case} the symbolic case we not rely
on the underlying subshift, only on the process.

\subsubsection*{Step 4 : Lifting $u,v$ to $X$ }

Let $x\in\pi^{-1}(u)$ and $y\in\pi^{-1}(v)$. To conclude the argument
we shall show that $(x,y)$ is non-recurrent for $T\times T$. Indeed,
let
\[
\delta=\min_{1\leq\ell<m\leq n}\delta_{m,\ell}>0
\]
For $i\in\mathbb{Z}\setminus\{0\}$ write $x'=T^{i}x$ and $y'=T^{i}y$,
so $\pi x'=u'$ and $\pi y'=v'$. By assumption there exist s $j\in\{-n,\ldots,n\}$
such that $\ell=u_{j}$, $m=v_{j}$ satisfy $1\leq\ell\neq m\leq n$,
and there is a $k\in\mathbb{N}$ such that $u_{j}=v_{j}=k$. In other
words, $S^{j}x\in C_{\ell}$, $S^{j}y\in C_{m}$ and $S^{j}x',S^{j}y'\in C_{k}$.
Thus, using the max-metric $d_{\infty}$ on $X\times X$,
\begin{align*}
\delta & \leq\delta_{\ell,m}\\
 & \leq\max\{d(C_{\ell},C_{k}),d(C_{k},C_{m})\}\\
 & \leq\max\{d(T^{j}x,T^{j}x'),d(T^{j}y',T^{j}y)\}\\
 & =d_{\infty}(T^{j}(x,y),T^{j}(x',y'))
\end{align*}
where for brevity we wrote $T$ for $T\times T$. Since $-n\leq j\leq n$
and $T$ is a homeomorphism, this implies
\[
d_{\infty}(T^{j}(x,y),T^{j}(x',y'))>\delta'
\]
for some $\delta'>0$ depending only on $\delta,n$. Since $i$ was
arbitrary and $(x',y')=T^{i}(x,y)$ this proves non-recurrence of
$(x,y)$.

\section{\label{sec:Tightness}Infinite entropy systems are not tight}

In this section we discuss Theorem \ref{thm:tightness}, which says
that a measure-preserving system of infinite entropy is not tight.
Our contribution to this topic is a small adaptation of the the argument
appearing in the original paper of Ornstein and Weiss for the finite-entropy
case \cite{OrnsteinWeiss2004}. We do not reproduce the full proof,
but briefly outline the argument and the necessary changes. 

\subsection{The original argument}

In the original proof, one begins with a measure preserving system
$(X,\mathcal{B},\mu,T)$ with $X=\{1,\ldots,a\}^{\mathbb{Z}}$, $T$
the shift, and $h_{\mu}(T)\in(0,\infty)$. Let $\mathcal{P}=\{P_{1},\ldots,P_{a}\}$
be the partition according to the symbol at time zero. Let $E\subseteq X$
be a null set; we wish to find $x,y\in X\setminus E$ with $\overline{d}(x,y)=0$.

Choose an open set $U\supseteq E$ of measure small enough that the
partition
\[
\mathcal{Q}_{0}=\{X,U\setminus P_{1},\ldots,X\setminus P_{a}\}
\]
generates a factor $X_{0}$ of $X$ of entropy less $h_{\mu}(T)$.
Now one constructs a tower of measure-theoretic factors $X_{n}$ between
$X$ and $X_{0}$,
\[
X\rightarrow\ldots\rightarrow X_{2}\rightarrow X_{1}\rightarrow X_{0}
\]
with $X_{n}$ (generated by a finite partition $\mathcal{Q}_{n}$
refining $\mathcal{Q}_{n-1}$), all with entropy strictly less than
$h_{\mu}(X)$, along with numbers $N_{1},N_{2}\ldots\in\mathbb{N}$,
such that
\begin{quote}
If $x,y\in X$ map to the same point in $X_{n}$, then the finite
sequences $x|_{[-N_{n},N_{n}]},y|_{[-N_{n},N_{n}]}$ agree on at least
a $1-\frac{1}{n}$ faction of their coordinates.
\end{quote}
Assuming this, for each $n$, use the fact that $X\rightarrow X_{n}$
has positive relative entropy to choose a pair $x^{(n)}\neq y^{(n)}$
that lie above the same point in $X_{n}$. This means that $x^{(n)},y^{(n)}$
project to the same point in $X_{0}$ as well. By shifting the points
if necessary, we can assume $x_{0}^{(n)}\neq y_{0}^{(n)}$, and hence
$x^{(n)},y^{(n)}\in X\setminus U$, for otherwise they would lie in
different atoms of $\mathcal{Q}_{0}$, and hence project to different
points in $X_{0}$. 

Now pass to a subsequence so that $x^{(n)}\rightarrow x$ and $y^{(n)}\rightarrow y$.
We still have $x_{0}\neq y_{0}$ so $x\neq y$, and since $x^{(n)},y^{(n)}\in X\setminus U$
and $U$ is open, also $x,y\in X\setminus U$. On the other hand every
common shift $\widehat{x},\widehat{y}$ of $x,y$ are limits of corresponding
shifts $\widehat{x}^{(n)},\widehat{y}^{(n)}$ of $x^{(n)},y^{(n)}$,
and $\widehat{x}^{(n)},\widehat{y}^{(n)}$ still project to the same
point in $X_{n}$, so they agree on the central block $[-N_{n},N_{n}]$
except a $1/n$-fraction of the coordinates. This means that the same
is true for $\widehat{x},\widehat{y}$. What this argument shows is
that on every block of length $2N_{n}$, the points $x,y$ agree on
a $1-\frac{1}{n}$ fraction of the coordinates. Thus $\overline{d}(x,y)<1/n$
for every $n$, hence $\overline{d}(x,y)=0$.

To construct of the factor $X_{n}$ (the partition $\mathcal{Q}_{n}$),
for a large $L_{n}\in\mathbb{N}$ one wants to encode enough information
about the $(\mathcal{P},L_{n})$-names so as to reveal a $1-1/n$
fraction of the coordinates, but omit enough information that the
entropy remains below $h_{\mu}(T)$. To achive this, one partitions
the typical $(\mathcal{P},L_{n})$-names into families of names that
are close in the hamming distance, and combinatorial estimates show
that one can do so in a way that replacing $(\mathcal{P},L_{n})$-names
by the name of the corresponding family gives the desired result.

\subsection{The infinite-entropy case}

The reason this proof fails when $h_{\mu}(T)=\infty$ is that we cannot
realize it with a finite generator (finite alphabet); and if we use
an infinite alphabet we lose compactness, and cannot pass to the limit
$x,y$ of $x^{(n)},y^{(n)}$. 

Our observation is that the entire argument can be carried out relative
to a factor $Z$ of $X$, provided that the relative entropy is positive
and finite. Indeed, a measure-theoretic factor $Z$ of finite relative
entropy can be obtained using standard techniques, or in a more heavy-handed
manner by appealing to the Weak Pinsker property \cite{Austin2018}.
One can then realize the factor $X\rightarrow Z$ topologically with
$X=Z\times\{1,a\}^{\mathbb{Z}}$ and the factor being projection to
the first coordinate. From this point, the entire argument proceeds
as before. One should note that the construction of the extension
$X_{n}$ of $X_{n-1}$ in Ornstein and Weiss's original paper already
involves a relative argument, considering typical names in $X$ relative
to $X_{n-1}$, and this can be carried out just as well when $X_{n-1}$
contains the factor $Z$. We leave the details to the interested reader.

\bibliographystyle{plain}
\bibliography{bib}

\begin{thebibliography}{1}

\bibitem{Austin2018}
Tim Austin.
\newblock Measure concentration and the weak {P}insker property.
\newblock {\em Publ. Math. Inst. Hautes \'{E}tudes Sci.}, 128:1--119, 2018.

\bibitem{BlanchardHostRuette2002}
F.~Blanchard, B.~Host, and S.~Ruette.
\newblock Asymptotic pairs in positive-entropy systems.
\newblock {\em Ergodic Theory and Dynamical Systems}, 22(3):671--686, 2002.

\bibitem{OrnsteinWeiss2004}
D.~Ornstein and B.~Weiss.
\newblock Mean distality and tightness.
\newblock {\em Tr. Mat. Inst. Steklova}, 244(Din. Sist. i Smezhnye Vopr.
  Geom.):312--319, 2004.

\bibitem{OrnsteinWeiss1975}
Donald~S. Ornstein and Benjamin Weiss.
\newblock Every transformation is bilaterally deterministic.
\newblock {\em Israel J. Math.}, 21(2-3):154--158, 1975.

\bibitem{Weiss1995}
B.~Weiss.
\newblock Multiple recurrence and doubly minimal systems.
\newblock In {\em Topological dynamics and applications ({M}inneapolis, {MN},
  1995)}, volume 215 of {\em Contemp. Math.}, pages 189--196. Amer. Math. Soc.,
  Providence, RI, 1998.

\end{thebibliography}

\bigskip
\bigskip
\footnotesize
\noindent{}\texttt{Department of Mathematics, The Hebrew University of Jerusalem, Jerusalem, Israel\\ email: michael.hochman@mail.huji.ac.il}
\end{document}